\newtheorem{theorem}{Theorem}
\newtheorem{lemma}[theorem]{Lemma}
\newtheorem{proposition}[theorem]{Proposition}
\theoremstyle{definition}
\newtheorem{definition}[theorem]{Definition}
\theoremstyle{remark}
\newtheorem{remark}[theorem]{Remark}
\newcommand{\DD}{{\mathbb D}}
\newcommand{\OO}{{\mathcal O}}
\newcommand{\PP}{{\mathcal P}}
\newcommand{\GG}{{\mathbb G}}
\newcommand{\NN}{{\mathbb N}}
\newcommand{\CC}{{\mathbb C}}
\newcommand{\cC}{{\mathcal C}}
\newcommand{\TT}{{\mathbb T}}
\DeclareMathOperator{\Aut}{Aut} \DeclareMathOperator{\id}{id}
\DeclareMathOperator{\tr}{tr}
\DeclareMathOperator{\Int}{int}
\renewcommand{\phi}{\varphi}
\subjclass[2000]{30E05, 30C80, 93B36, 93B50}
\begin{document}

\title{The group of automorphisms of the~pentablock}

\address{Universit\'e Laval, QC, Canada}
\author{\L ukasz Kosi\'nski}\email{lukasz.kosinski@im.uj.edu.pl}

\thanks{The author is partially supported by the Iuventus Plus grant}
\keywords{Pentablock, group of automorphisms, $\mu$-synthesis, linear convexity, $\CC$-convex functions, analytic retracts}

\maketitle

\begin{abstract} Answering questions posed in \cite{Agl-Lyk-You14} we determine the group of holomorphic automorphisms of the pentablock. The geometry of the pentablock (linear convexity, being an analytic retract of special convex domains) is also studied.\end{abstract}

\section{Introduction}

The pentablock, a domain defined in \cite{Agl-Lyk-You14}, arises from a test example corresponding to a single uncertainty structure in the $2 \times 2$ case. It is a subdomain of $\CC^3$ denoted by $\PP$ and defined as the image of the classical $2\times 2$ Cartan domain of the first type $\mathcal R_I=\{z\in\CC^{2\times 2}:\ ||z||<1\}$ under the mapping $$\pi:z=(z_{ij})\mapsto (z_{12}, \tr z, \det z).$$

The main aim of this note is to determine the group of holomorphic automorphisms of $\mathcal P$ (see Theorem~\ref{autp}). It turns out that $\Aut(\PP)$ is not transitive, moreover the orbit of $0$ forms an 1-dimensional variety. We shall also investigate geometric properties of the pentablock. In particular, we shall prove that the pentablock is not an analytic retract of the open unit ball of any $\mathcal J^*$ algebra of finite rank (see Proposition~\ref{retrp}). These results answer questions posed in \cite{Agl-Lyk-You14}.

\bigskip

The paper is organized as follows. We start with describing geometric properties of the pentablock which will be crucial for our considerations. In particular, we shall recall different characterizations of points in $\PP$ due to Agler, Lykova and Young, present some actions playing on the pentablock and express $\PP$ as a Hartogs domain over the special domain. Next we shall focus on its convexity properties which will turn out to be quite useful in the sequel. The structure of the boundary of $\PP$ will also be studied.

Then, in the next sections, making use of obtained properties of the pentablock we shall prove our main results.

\bigskip

Here is some notation. Throughout the paper $\mathbb D$ denotes the unit disc in the complex plane, additionally by $\mathbb T$ we shall denote the unit circle. $\partial_s D$ stands for the Shilov boundary with respect to the algebra $\mathcal O(D)\cap \mathcal C(\bar D)$ of a bounded domain $D$ in $\CC^n$.

\section{Geometric properties of the pentablock}

\subsection{Characterizations of points in $\mathcal P$} One of the main results of \cite{Agl-Lyk-You14} contains several descriptions of the pentablock. Since we will exploit almost all of them, we recall them for the convenience of the Reader:
\begin{theorem}[\cite{Agl-Lyk-You14}, Theorem~1.1, Theorem~5.2]\label{th:severaldescriptions}
Let 
$$(s,p)=(\lambda_1 + \lambda_2, \lambda_1 \lambda_2),$$
where $\lambda_1,\lambda_2\in \DD$. Let $a\in \mathbb C$ and $$\beta=\frac{s- \bar s p}{1- |p|^2}.$$ The following statements are equivalent:
\begin{enumerate}
\item[(1)] $(a,s,p)\in \mathcal P$,
\item[(2)] $|a|< \left|1- \frac{\frac12 s\bar \beta}{1+ \sqrt{1-|\beta|^2}}\right|$,
\item[(3)] $|a|< \frac12 |1- \bar \lambda_2 \lambda_1| + \frac12 (1-|\lambda_1|^2)^\frac12 (1-|\lambda_2|^2)^\frac12,$
\item[(4)] $\sup_{z\in \DD} |\Psi_z(a,s,p)|<1$, where $\Psi_z$ is the linear fractional map $$\Psi_z(a,s,p)=\frac{a(1-|z|^2)}{1-sz + pz^2}.$$
\end{enumerate}
\end{theorem}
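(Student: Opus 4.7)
My plan is to establish $(1)\Leftrightarrow(3)$ first by a direct matrix-theoretic argument, then $(2)\Leftrightarrow(3)$ as an algebraic identity, and finally $(1)\Leftrightarrow(4)$ via the observation that $\Psi_z$ is realized as the $(1,2)$-entry of a matrix M\"obius automorphism of $\mathcal R_I$. The last equivalence will be the most delicate.

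For $(1)\Leftrightarrow(3)$ I would use Schur triangulization: every $z\in\CC^{2\times 2}$ with $\tr z=s$ and $\det z=p$ is unitarily similar to some $T=\begin{pmatrix}\lambda_1 & c\\ 0 & \lambda_2\end{pmatrix}$, where $\lambda_1,\lambda_2$ are the eigenvalues. A short calculation gives $\det(I-T^*T)=(1-|\lambda_1|^2)(1-|\lambda_2|^2)-|c|^2$, so $\|T\|<1$ is equivalent to $|\lambda_i|<1$ together with $|c|^2<(1-|\lambda_1|^2)(1-|\lambda_2|^2)$. Writing the first column of the conjugating unitary $U$ as $(\cos\theta\,e^{i\alpha},\sin\theta\,e^{i\beta})^T$, expanding $(UTU^*)_{12}$ and optimizing over $\alpha$, $\beta$, $\arg c$ and $|c|$ reduces the maximum of $|(UTU^*)_{12}|$ over all admissible $z$ to the supremum in $\theta$ of
\[
\tfrac12\sin(2\theta)\,|\lambda_1-\lambda_2|+\tfrac12(1+\cos(2\theta))\sqrt{(1-|\lambda_1|^2)(1-|\lambda_2|^2)},
\]
and this supremum collapses to the right-hand side of (3) thanks to the identity $|\lambda_1-\lambda_2|^2+(1-|\lambda_1|^2)(1-|\lambda_2|^2)=|1-\bar\lambda_2\lambda_1|^2$. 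The automorphism $(a,s,p)\mapsto(e^{i\tau}a,s,p)$, induced by conjugating $z$ with $\mathrm{diag}(e^{i\tau/2},e^{-i\tau/2})$, then fills in each fiber $\pi^{-1}(s,p)\cap\PP$ as a full disc of radius equal to this supremum, yielding $(1)\Leftrightarrow(3)$.

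The equivalence $(2)\Leftrightarrow(3)$ is a purely algebraic verification. Substituting $s=\lambda_1+\lambda_2$ and $p=\lambda_1\lambda_2$ gives $s-\bar sp=\lambda_1(1-|\lambda_2|^2)+\lambda_2(1-|\lambda_1|^2)$, and after rationalizing $1-(s\bar\beta/2)/(1+\sqrt{1-|\beta|^2})$ by multiplying numerator and denominator by $1+\sqrt{1-|\beta|^2}$ and using the factored form of $1-|\beta|^2$, the modulus of the expression in (2) matches the right-hand side of (3).

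Finally, for $(1)\Leftrightarrow(4)$ the crucial observation is the identity
\[
\Psi_z(\pi(w))=\frac{w_{12}(1-|z|^2)}{\det(I-zw)}=\bigl[(w-\bar zI)(I-zw)^{-1}\bigr]_{12},
\]
which exhibits $\Psi_z\circ\pi$ as the $(1,2)$-entry of the matrix M\"obius automorphism $w\mapsto(w-\bar zI)(I-zw)^{-1}$ of $\mathcal R_I$. For $z\in\DD$ this map sends $\mathcal R_I$ into itself, so every entry of its image has modulus strictly less than $1$; combined with the fact that $\Psi_z$ vanishes on $\partial\DD$ and depends continuously on $z\in\bar\DD$, this gives $(1)\Rightarrow(4)$. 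The main obstacle is the converse, which I would settle by proving
\[
\sup_{z\in\DD}\frac{1-|z|^2}{|(1-\lambda_1z)(1-\lambda_2z)|}=\frac{2}{|1-\bar\lambda_2\lambda_1|+\sqrt{(1-|\lambda_1|^2)(1-|\lambda_2|^2)}},
\]
so that condition (4) reduces directly to (3). After the rotational reduction $(\lambda_1,\lambda_2,z)\mapsto(e^{i\psi}\lambda_1,e^{i\psi}\lambda_2,e^{-i\psi}z)$ one is left with a real two-variable critical-point equation, solvable in closed form; isolating the extremal $z_0$ explicitly and matching its value with the bound from (3) is the principal technical difficulty.
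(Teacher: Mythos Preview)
The paper does not give its own proof of this statement: Theorem~\ref{th:severaldescriptions} is quoted from \cite{Agl-Lyk-You14} (Theorems~1.1 and~5.2 there) purely for reference and is used throughout as a black box. There is thus no in-paper argument to compare your proposal against.

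That said, your outline is a sound self-contained route to the result. The Schur-triangulation argument for $(1)\Leftrightarrow(3)$ is natural; the computation $\det(I-T^*T)=(1-|\lambda_1|^2)(1-|\lambda_2|^2)-|c|^2$ and the identity $|\lambda_1-\lambda_2|^2+(1-|\lambda_1|^2)(1-|\lambda_2|^2)=|1-\bar\lambda_2\lambda_1|^2$ are both correct, and the trigonometric maximisation $B+\sqrt{A^2+B^2}$ does collapse to the right-hand side of~(3). One small point to tighten: the rotation $(a,s,p)\mapsto(e^{i\tau}a,s,p)$ only gives circular symmetry of the fibre; you still need a word (connectedness of the parameter space together with $a=0$ being attained by the diagonal matrix) to conclude that the fibre is a \emph{full} open disc rather than an annulus. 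Your identity $\Psi_z(\pi(w))=\bigl[(w-\bar zI)(I-zw)^{-1}\bigr]_{12}$ is correct and, combined with $\Psi_z\to 0$ as $|z|\to 1$, gives $(1)\Rightarrow(4)$ cleanly via compactness of $\overline\DD$. The explicit supremum formula you state for $(4)\Rightarrow(3)$ is true and is indeed the technical core; in \cite{Agl-Lyk-You14} the equivalence is established by a related but somewhat different organisation of the same ingredients.
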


\subsection{Actions on the pentablock}\label{sec:act} It is clear that $\left( \begin{array}{cc} \lambda z_{11} & z_{12} \\ \lambda^2 z_{21} & \lambda z_{22} \end{array} \right)\in \mathcal R_I$ whenever $\lambda\in \DD$ and $\left( \begin{array}{cc} z_{11} & z_{12} \\ z_{21} & z_{22} \end{array} \right)\in \mathcal R_I$. Therefore, since $\mathcal P=\pi(\mathcal R_I)$, we find that the pentablock is a $(0,1,2)$-balanced domain, which means that $(a, \lambda s, \lambda^2 p)\in \mathcal P$ for any $(a,s,p)\in \mathcal P$ and $\lambda \in \mathbb D$.

Moreover, thanks to Theorem~\ref{th:severaldescriptions} one can observe that the pentablock is a Hartogs domain in $\mathbb C^3$ over a domain $\mathbb G_2:=\{(\lambda_1 + \lambda_2, \lambda_1 \lambda_2):\ \lambda_1, \lambda_2\in \DD\}$. More precisely,
\begin{equation}\label{penta} \mathcal P=\left\{(a,s,p)\in \CC \times \GG_2:\ |a|< \left|1- \frac{\frac12 s\bar \beta}{1+ \sqrt{1-|\beta|^2}}\right| \right \},
\end{equation}
where $\beta = \frac{s-\bar s p}{1-|p|^2}$.

An immediate consequence of the description mentioned above is the fact that $\mathcal P$ is a $(1,0,0)$-balanced domain i.e. $(\lambda a, s, p)\in \PP$ for any $(a,s,p)\in \PP$ and $\lambda\in \DD$. 

Therefore, joining facts mentioned above we see that $\mathcal P$ is $(k,1,2)$-balanced for any $k\in \mathbb N$.

Recall here that the domain $\GG_2$ appearing in \eqref{penta} is called the \emph{symmetrised bidisc} and that it may be expressed in coordinates $(s,p)$ in the following way: $$\mathbb G_2=\{(s,p)\in \CC^2 :\ |s- \bar s p| + |p|^2<1\}.$$

\subsection{Pentablock as a Hartogs domain and its convexity properties}

For the convenience of the Reader we start with recalling some notions of convexity and their properties (see also \cite{And-Pas2004}):
\begin{itemize} 
 \item a bounded domain $D$ of $\mathbb C^n$ is called \emph{$\mathbb C$-convex} if its intersection with any complex line is connected and simply-connected (i.e., according to the Riemann mapping theorem, it is a disc in an analytic sense). 
\end{itemize}
The definition implies that all $\mathbb C$-convex domains in the complex plane are just domains which are conformally equivalent with the unit disc. However, there is no such nice characterization of $\mathbb C$-convexity in higher dimensions just to mention the symmetrised bidisc which is $\mathbb C$-convex but not equivalent with any convex domain (\cite{Cos 2004b}).

We shall also deal with linear convexity. It is a very standard and simple fact that a domain $D\subset \CC^n$ is convex if and only if for any $a\not\in D$ there is a real hyperplane passing through $a$ and omitting $D$. Linear convexity may be seen as a complex counterpart of this property:
\begin{itemize} 
\item a set $D$ of $\mathbb C^n$ is said to be \emph{linearly convex} if for any $a\not\in D$ there is a complex hyperplane passing through $a$ and omitting $D.$
\end{itemize}
An immediate consequence of this definition is that any convex domain is linearly convex and that all domains in the complex plane are linearly convex. It is non-trivial that $\mathbb C$-convexity implies linear convexity (see \cite[Theorem~2.3.9]{And-Pas2004}), so in view of the situation in the complex plane we see that $\CC$-convexity is a stronger notion.

Both $\CC$-convexity and linear convexity imply many nice properties. For instance, any linearly convex domain is a domain of holomorphy, what is more, under some additional conditions it is even polynomially convex. $\CC$-convexity implies hyperconvexity.

As already mentioned, the pentablock is a Hartogs domain in $\CC^3$. To author's knowledge the are hardly two papers devoted to $\CC$-convexity (linear convexity) of Hartogs domains (see \cite{Kis}, \cite{Jac 2006}), both of them containing some results for very special class of domains: Hartogs domains in $\CC^2$ whose base is the unit disc. The results that appeared in the first mentioned paper are also gathered in \cite{And-Pas2004} and some of them may be find generalized in \cite{Jac}.

For the convenience of the Reader we shall recall in Theorem~\ref{th:hart} the result describing $\mathbb C$-convexity of Hartogs domains in $\CC^2$ over $\DD$ in terms of their defining functions.

To do it we need the following
\begin{definition}A real valued $\cC^2$-smooth function $u$ defined on a subdomain $D$ of $\CC$ is said to be \emph{$\CC$-convex} if the inequality 
\begin{equation}\label{df:c-con} u_{z\bar z}\geq |u_{zz} - (u_z)^2|^2\end{equation} holds on $D$. Analogously, a real valued $\cC^2$ function $u$ of several complex variables is \emph{$\CC$-convex} if its restriction to any complex line is $\CC$-convex in the previous sense.
\end{definition}

Direct calculations show that $u\circ T$ is $\CC$-convex for any complex affine mapping $T$ and a $\CC$-convex function $u$.
\begin{remark}\label{rem:conf}
It is probably well known that pluriharmonic and $\CC$-convex functions on $D\subset \CC^n$ are of the form $u(z)=-2\log |\sum_{j=0}^n \alpha_j z^j|$, where $\alpha_j\in \CC$ are such that $z\mapsto \sum_{j=0}^n \alpha_j z^j$ does not vanish on $D$. 

To prove this property recall that any pluriharmonic function $u$ may be written locally as $u(z)= - \log f(z) \overline{f(z)}$, where $f$ is holomorphic. Then it is enough to observe that condition \eqref{df:c-con} expressed in terms of $f$ means that the second derivative of $f$ vanishes.
\end{remark}

\begin{theorem}[\cite{And-Pas2004} Theorem~2.5.16, Example~2.5.16, Proposition~2.5.9]\label{th:hart} Let $u\in \mathcal C^2(\DD)$ be a real function. Then the Hartogs domain $E=\{(w,z)\in \CC\times \DD:\ |w|^2< e^{-u(z)}\}$ is $\CC$-convex if and only if it is linearly convex if and only if $u$ is $\CC$-convex.
\end{theorem}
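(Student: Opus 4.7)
\medskip

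\noindent\textbf{Proof plan.} The plan is to verify the cycle of implications: $E$ is $\CC$-convex $\Rightarrow$ $E$ is linearly convex $\Rightarrow$ $u$ is $\CC$-convex $\Rightarrow$ $E$ is $\CC$-convex. The first arrow is the general fact already quoted as \cite[Theorem~2.3.9]{And-Pas2004}, so only the remaining two implications require argument, and both will exploit the Hartogs structure of $E$ together with the smoothness of the defining function $\rho(w,z) = \log|w|^2 + u(z)$ (which is smooth away from $\{w=0\}$, so $\partial E$ is smooth there).

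For the step linearly convex $\Rightarrow u$ is $\CC$-convex, I would fix $z_0 \in \DD$ and $w_0 \neq 0$ with $|w_0|^2 = e^{-u(z_0)}$, so that $(w_0,z_0)$ is a smooth boundary point of $E$. Linear convexity and smoothness force the complex tangent line to $\partial E$ at this point to serve as a local supporting complex hyperplane. Parametrising it as $t \mapsto (w_0 + \alpha t, z_0 + \xi t)$, with $\alpha$ determined by the holomorphic tangency (hence by $u_z(z_0)$ and $\xi$), the condition $\rho \geq 0$ along this line, after cancellation of the first-order terms and Taylor expansion to second order in $t,\bar t$, reduces to a quadratic inequality. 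Since $\xi \in \CC$ is free, the resulting family of inequalities is exactly the pointwise $\CC$-convexity condition on $u$ at $z_0$.

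For the step $u$ is $\CC$-convex $\Rightarrow E$ is $\CC$-convex, I would take an arbitrary complex line $L$ parametrised by $t \mapsto (w_0+\alpha t, z_0+\beta t)$ and show that $L \cap E$ is connected and simply connected. The case $\beta=0$ is a round disc in the $w$-plane, so assume $\beta \neq 0$. Away from the (discrete, interior) set $\{w_0+\alpha t = 0\} \subset L \cap E$, the intersection coincides with the negative sublevel set of
\begin{equation*}
\phi(t) = u(z_0+\beta t) + \log|w_0+\alpha t|^2.
\end{equation*}
By Remark~\ref{rem:conf}, $\log|w_0+\alpha t|^2$ is both pluriharmonic and $\CC$-convex, and moreover (because $w_0+\alpha t$ is affine, so its second derivative vanishes) it satisfies the defining inequality of $\CC$-convexity with equality. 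A direct computation using this last feature shows that the sum $\phi$ inherits the $\CC$-convex inequality from $u$ alone. The classical fact that the strict sublevel sets of a $\CC$-convex function of one complex variable are $\CC$-convex subsets of $\CC$, i.e.\ discs, then identifies $L \cap E$ as simply connected.

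The main obstacle is the last implication: the careful verification that $\phi$ inherits $\CC$-convexity from $u$ despite the addition of the logarithmic pluriharmonic term, a step that crucially hinges on the affinity of $w_0 + \alpha t$ (via Remark~\ref{rem:conf}). Once the generic case is established, the degenerate situations — lines meeting $\{w=0\}$ or approaching $\partial\DD$ — are handled by a straightforward limiting argument.
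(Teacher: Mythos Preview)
The paper does not supply its own proof of this theorem: it is quoted verbatim from \cite{And-Pas2004} (Theorem~2.5.16, Example~2.5.16, Proposition~2.5.9) and used as a black box. So there is nothing in the paper to compare your argument against. That said, your proposal contains a genuine error in the third implication that is worth flagging.

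Your step ``$u$ $\CC$-convex $\Rightarrow$ $E$ $\CC$-convex'' rests on the claim that $\log|w_0+\alpha t|^2$ is $\CC$-convex and satisfies \eqref{df:c-con} with equality. This is false, and in fact Remark~\ref{rem:conf} says the opposite: the pluriharmonic $\CC$-convex functions are those of the form $-2\log|\text{affine}|=-\log|\text{affine}|^2$, with a \emph{minus} sign. For $h(t)=+\log|w_0+\alpha t|^2$ one computes $h_{t\bar t}=0$ while $h_{tt}-(h_t)^2=-2\alpha^2/(w_0+\alpha t)^2\neq 0$, so $h$ violates \eqref{df:c-con} as soon as $\alpha\neq 0$. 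Consequently the assertion that ``the sum $\phi$ inherits the $\CC$-convex inequality from $u$ alone'' is unsupported; already for $u\equiv 0$ (trivially $\CC$-convex, with $E=\DD\times\DD$) your $\phi$ equals $\log|w_0+\alpha t|^2$, which is not $\CC$-convex. The domain $E$ is of course $\CC$-convex in that example, but not for the reason you give. The argument in \cite{And-Pas2004} does not proceed by showing $\phi$ is $\CC$-convex; it goes instead through the second-order (Behnke--Peschl type) boundary characterisation of $\CC$-convexity for $\mathcal C^2$-smooth domains, which for the Hartogs domain reduces---via exactly the Taylor expansion you wrote out for the second implication---to the inequality \eqref{df:c-con} for $u$. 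Your treatment of ``linearly convex $\Rightarrow$ $u$ $\CC$-convex'' is essentially that computation and is fine; it is only the route back from $u$ to $\CC$-convexity of $E$ that needs to be replaced.
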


\begin{remark}\label{rem:DtimesG}
Note that there is no direct counterpart of the above result for Hartogs domains over $\mathbb C$-convex domains, for example $\GG_2\times \DD$ is not $\CC$-convex (for the $\CC$-convexity of the symmetrised bidisc see \cite[Theorem~1]{Nik-Pfl-Zwo 2008}).
\end{remark}

In view of Theorem~\ref{th:hart} it is natural to express \eqref{penta} as 
\begin{equation}\label{penta1} \mathcal P=\{(a,s,p)\in \mathbb C\times \mathbb G_2:\ |a|^2<e^{-\varphi(s,p)}\},
\end{equation}
 where 
\begin{equation}\label{pentaphi}\varphi(s,p)= -2\log \left|1- \frac{\frac12 s\bar \beta}{1+ \sqrt{1-|\beta|^2}}\right|,\quad (s,p)\in \GG_2.
\end{equation}
Observe that $\varphi$ is smooth on $\GG_2$. Moreover, we have:
\begin{proposition}\label{con1} The function $\varphi$ is $\CC$-convex.
\end{proposition}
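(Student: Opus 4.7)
The cleanest route exploits characterization~(4) of Theorem~\ref{th:severaldescriptions}, which together with \eqref{penta1} gives the supremum representation
\[
\varphi(s,p)=\sup_{z\in\DD}\psi_z(s,p),\qquad \psi_z(s,p):=2\log(1-|z|^2)-2\log|1-sz+pz^2|.
\]
For $(s,p)\in\GG_2$ one factors $1-sz+pz^2=(1-\lambda_1 z)(1-\lambda_2 z)\neq 0$ for $z\in\DD$, so each $\psi_z$ is smooth and pluriharmonic on $\GG_2$. The crucial structural fact is that $(s,p)\mapsto 1-sz+pz^2$ is \emph{affine}, so restricting $\psi_z$ to any complex line $t\mapsto(s_0+ts_1,p_0+tp_1)$ yields $-2\log|A_z+tB_z|+\text{const}$, and Remark~\ref{rem:conf} then says that every $\psi_z$ is $\CC$-convex on $\GG_2$---indeed, with equality $(\psi_z)_{tt}=(\psi_z)_t^2$ and $(\psi_z)_{t\bar t}=0$ in \eqref{df:c-con}.

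The remaining task is to transfer $\CC$-convexity from the pluriharmonic layers $\psi_z$ to their envelope $\varphi$. Fixing $(s_0,p_0)\in\GG_2$, the supremum is attained at a unique interior maximizer $z^{*}=z^{*}(s,p)$: existence follows from $\psi_z\to-\infty$ as $|z|\to 1$, while uniqueness and smooth dependence come from an analysis of the critical equation $\partial_z\psi_z=0$, an explicit rational equation in $z$ whose coefficients depend on $(\lambda_1,\lambda_2)$. The envelope theorem then equates first derivatives of $\varphi$ and $\psi_{z^{*}}$ at $(s_0,p_0)$, and positivity of the Hessian of $\varphi-\psi_{z^{*}}\geq 0$ at its minimum reduces \eqref{df:c-con} for $\varphi$ to a statement about the second-order corrections coming from the motion of $z^{*}$.

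The main obstacle is the squared right-hand side in \eqref{df:c-con}: Hessian positivity of $\varphi-\psi_{z^{*}}$ alone yields only $\varphi_{t\bar t}\geq|\varphi_{tt}-\varphi_t^2|$, which is strictly weaker than what is needed. To bridge this gap I would substitute the implicit $(s,p)$-derivatives of $z^{*}$ (obtained from differentiating the critical equation) into the second-order Taylor expansion of $\varphi$ and exploit the equality $(\psi_z)_{tt}=(\psi_z)_t^2$ to produce the quadratic improvement. Should this envelope computation turn out to be too opaque, one can verify \eqref{df:c-con} directly from \eqref{pentaphi}, first reducing to $s\in[0,\infty)$ via the invariance $\varphi(e^{i\theta}s,e^{2i\theta}p)=\varphi(s,p)$ noted in Section~\ref{sec:act}, and then computing $\varphi_t,\varphi_{tt},\varphi_{t\bar t}$ by hand from $\log|h|^2=\log h+\log\bar h$, simplifying with the identity $(1-\sqrt{1-|\beta|^2})(1+\sqrt{1-|\beta|^2})=|\beta|^2$.
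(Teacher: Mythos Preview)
Your starting point coincides with the paper's: write $\varphi=\sup_{z\in\DD}\varphi^z$ with $\varphi^z(s,p)=-2\log\bigl|\tfrac{1-sz+pz^2}{1-|z|^2}\bigr|$, and observe via Remark~\ref{rem:conf} that each $\varphi^z$ is $\CC$-convex (affine inside the logarithm). The divergence is in how to pass from the layers to the envelope.

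Your envelope-theorem route is incomplete, as you yourself flag: differentiating the critical equation for $z^*$ and substituting back does not obviously produce the extra power needed on the right-hand side of \eqref{df:c-con}, and you leave this as ``should this turn out to be too opaque\ldots''. The direct-computation fallback is only announced, not executed; with $\beta=(s-\bar s p)/(1-|p|^2)$ inside a square root this is genuinely unpleasant. So as written the proposal is a plan with a gap, not a proof.

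The paper avoids the analytic difficulty altogether by a geometric detour you are missing. By Theorem~\ref{th:hart}, on any disc $\Delta$ contained in a complex line through $\GG_2$, $\CC$-convexity of $\varphi|_\Delta$ is \emph{equivalent} to linear convexity of the Hartogs domain $\{(a,s,p)\in\CC\times\Delta:\ |a|^2<e^{-\varphi(s,p)}\}$. The same equivalence applied to each $\varphi^z$ shows that the corresponding Hartogs domain $\PP_z\cap(\CC\times\Delta)$ is $\CC$-convex, hence linearly convex. Now linear convexity is trivially stable under arbitrary intersections (unlike the functional inequality \eqref{df:c-con}), so $\PP\cap(\CC\times\Delta)=\Int\bigcap_z\bigl(\PP_z\cap(\CC\times\Delta)\bigr)$ is linearly convex, and one reads Theorem~\ref{th:hart} backwards to conclude. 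In effect, the paper proves the ``sup of $\CC$-convex functions is $\CC$-convex'' statement (under the smoothness you already have for $\varphi$) by replacing a delicate second-order estimate with the tautology that an intersection of complements of hyperplanes is again such a complement. That is the key lemma you should import.
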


\begin{proof}[Proof of Proposition~\ref{con1}] 

First note that it follows from Theorem~\ref{th:severaldescriptions} (4) that the pentablock may be given as
\begin{equation}\label{inter}\PP=\Int \bigcap_{z\in \DD}\{(a,s,p)\in \CC\times \GG_2:\ |\Psi_z(a,s,p)|< 1\}.
\end{equation}

Take any complex line $l$ intersecting $\GG_2$ and any (one-dimensional) disc $\Delta$ contained in $l \cap \GG_2$. We aim at showing that $u$ restricted to $\Delta$ is $\CC$-convex. Thanks to Theorem~\ref{th:hart}, it suffices to show that the following Hartogs domain $\{(a,s,p)\in \CC\times \Delta:\ |a|^2< e^{-u(s,p)}\}$ is linearly convex in $\CC\times l$.

To do it note that $\PP_z:=\{(a,s,p)\in \CC\times \GG_2:\ |\Psi_z(a,s,p)|<1\}$ is a Hartogs domain, more precisely $$\PP_z=\{(a,s,p)\in \CC\times \GG_2: |a|^2< e^{- \varphi^z(s,p)}\},$$ where $\varphi^z(s,p)=-2\log\left| \frac{1-sz + pz^2}{1-|z|^2} \right|,$ $(s,p)\in \GG_2$, $z\in \DD$. One can check that $\varphi^z$ is $\CC$-convex (see also Remark~\ref{rem:conf}), so $\PP_z\cap (\CC\times \Delta)$ is a $\CC$-convex subdomain of $\CC\times l$, by Theorem~\ref{th:hart}. Therefore, $\bigcap_{z\in \DD} (\PP_z\cap (\CC\times l))$ is linearly convex. Since the interior of a linearly convex set is linearly convex,  $\PP\cap (\CC\times \Delta)$ is linearly convex, as well.

\end{proof}

Note that in the proof of the previous proposition we have actually obtained the following result which is well known for plurisubharmonic and convex functions:
\begin{proposition} Let $\{u_\alpha\}$ be a locally bounded from above family of $\CC$-convex functions on a domain $D$ of $\CC^n$. Let $u:=\sup\{u_\alpha\}$. If $u$ is $\cC^1$ smooth, then it is $\CC$-convex.
\end{proposition}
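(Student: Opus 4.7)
The plan is to adapt the argument used for Proposition~\ref{con1}. Since $\CC$-convexity of a function of several variables is tested on each complex line, I may fix an arbitrary disc $\Delta$ contained in the intersection of $D$ with a complex line $l$, and reduce to showing that $u|_\Delta$ is $\CC$-convex. By Theorem~\ref{th:hart} this amounts to proving that the Hartogs domain
$$E := \{(w,z) \in \CC \times \Delta : |w|^2 < e^{-u(z)}\}$$
is linearly convex in $\CC \times l$.

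Associate to each $u_\alpha$ the Hartogs domain $E_\alpha := \{(w,z) \in \CC \times \Delta : |w|^2 < e^{-u_\alpha(z)}\}$. Each $E_\alpha$ is linearly convex by Theorem~\ref{th:hart}, and hence so is the intersection $\bigcap_\alpha E_\alpha$: any point $a$ outside the intersection misses some $E_{\alpha_0}$, and a complex hyperplane through $a$ omitting $E_{\alpha_0}$ automatically omits $\bigcap_\alpha E_\alpha$. As was recorded in the proof of Proposition~\ref{con1}, the interior of a linearly convex set is itself linearly convex, so $\Int\bigcap_\alpha E_\alpha$ is linearly convex.

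The remaining step is the identification $\Int\bigcap_\alpha E_\alpha = E$. The defining relation $u = \sup_\alpha u_\alpha$ gives the inclusions
$$\{(w,z)\in\CC\times\Delta : |w|^2 < e^{-u(z)}\} \subset \bigcap_\alpha E_\alpha \subset \{(w,z)\in\CC\times\Delta : |w|^2 \leq e^{-u(z)}\};$$
the continuity of $u$ (coming from the $\cC^1$ hypothesis) makes the outer sets open and closed respectively, so both coincide with the interior of the middle one. Thus $E$ is linearly convex, and a final application of Theorem~\ref{th:hart} yields the $\CC$-convexity of $u|_\Delta$, hence of $u$.

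The main delicate point will be matching the regularity hypothesis in the statement with the $\cC^2$ regularity implicit both in Definition~\eqref{df:c-con} and in Theorem~\ref{th:hart}; once this is clarified (either by interpreting \eqref{df:c-con} distributionally or by simply assuming the slightly stronger regularity that is in practice available), the argument proceeds with no further obstacle, and one sees that it is really nothing more than the same pattern already exploited in the pentablock case, stripped of the particular defining function $\varphi$.
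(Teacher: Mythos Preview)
Your proposal is correct and is precisely the approach the paper intends: the paper does not give a separate proof, but simply remarks that the argument for Proposition~\ref{con1} already yields this general statement, and your write-up is nothing other than that argument with $\varphi^z$ replaced by $u_\alpha$. Your observation about the $\cC^1$/$\cC^2$ mismatch is well taken; the paper glosses over this as well, since in the application $\varphi$ is smooth.
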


\begin{remark}\label{rem:phi}
Note that it follows from Proposition~\ref{con1} and Remark~\ref{rem:conf} that the function $\varphi$ given by \eqref{pentaphi} is not pluriharmonic.
\end{remark}

Finally, we shall show that $\PP$ is linearly convex. Note that this result implies immediately number of properties of $\PP$ such as polynomial convexity (for a direct proof of polynomial convexity of the pentablock see \cite[Theorem~6.3]{Agl-Lyk-You14}). It would be also interesting to know whether the pentablock is $\CC$-convex. As noted in Remark~\ref{rem:DtimesG} this fact cannot be deduced using directly methods working for Theorem~\ref{th:hart}. The question about $\CC$-convexity of $\PP$ and the next result (Proposition~\ref{prop:lin}) are very interesting in the light of Lempert's theorem on the equality of holomorphically invariant functions and metrics for the pentablock. Let us mention here that it is a long-standing open problem whether Lempert's theorem holds for $\CC$-convex domains, so far it is known for smooth $\CC$-convex domains (see \cite{Lem 1982}).
\begin{proposition}\label{prop:lin}
The pentablock is linearly convex.
\end{proposition}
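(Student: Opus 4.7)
The plan is to produce, for each $(a_0,s_0,p_0)\in\CC^3\setminus\PP$, a complex affine hyperplane through $(a_0,s_0,p_0)$ that misses $\PP$, working directly from characterization~(4) of Theorem~\ref{th:severaldescriptions} and case-splitting on whether the projection $(s_0,p_0)$ lies in $\GG_2$.

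If $(s_0,p_0)\notin\GG_2$, then since $\GG_2$ is $\CC$-convex (hence linearly convex), there is a complex line $\ell\subset\CC^2$ through $(s_0,p_0)$ disjoint from $\GG_2$. The complex hyperplane $\CC\times\ell\subset\CC^3$ contains $(a_0,s_0,p_0)$ and misses $\PP\subset\CC\times\GG_2$, which disposes of this case.

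The substantive case is $(s_0,p_0)\in\GG_2$. Writing $(s_0,p_0)=(\lambda_1+\lambda_2,\lambda_1\lambda_2)$ with $|\lambda_i|<1$, the polynomial $1-s_0z+p_0z^2=(1-\lambda_1z)(1-\lambda_2z)$ is bounded away from zero on $\bar\DD$, and since $1-|z|^2$ vanishes on $\TT$, the continuous function $z\mapsto|\Psi_z(a_0,s_0,p_0)|$ on $\bar\DD$ attains its supremum at some $z_0\in\DD$. By (4) this supremum is at least $1$, so $c:=\Psi_{z_0}(a_0,s_0,p_0)$ satisfies $|c|\ge 1$. I would then take
$$H:=\{(a,s,p)\in\CC^3:\ a(1-|z_0|^2)=c(1-sz_0+pz_0^2)\},$$
a complex affine hyperplane containing $(a_0,s_0,p_0)$ by the choice of $c$. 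For any $(a,s,p)\in\PP$ we have $(s,p)\in\GG_2$, so $1-sz_0+pz_0^2\ne 0$, and the strict inequality $|\Psi_{z_0}(a,s,p)|<1\le|c|$ forces $\Psi_{z_0}(a,s,p)\ne c$, i.e.\ $(a,s,p)\notin H$.

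The step I expect to deserve the most care is the attainment of the supremum in the second case, as it is what lets us select a single $z_0$ and produce a genuine separating hyperplane rather than a limit of hyperplanes. This attainment rests precisely on the openness of $\GG_2$ forcing the denominator to stay bounded below on $\bar\DD$, which is why the $(s_0,p_0)\notin\GG_2$ case must be handled beforehand by invoking the linear convexity of $\GG_2$.
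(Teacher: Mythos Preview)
Your proof is correct and follows essentially the same route as the paper: split on whether $(s_0,p_0)\in\GG_2$, use linear convexity of $\GG_2$ in the first case, and in the second case exhibit the level set $\{\Psi_{z_0}=c\}$ with $|c|\ge 1$ as the separating hyperplane. If anything you are more careful than the paper, which simply asserts the existence of such $z_0$ and $\omega$, whereas you justify the attainment of the supremum by noting that $|\Psi_z(a_0,s_0,p_0)|$ extends continuously to $\bar\DD$ and vanishes on $\TT$.
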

\begin{proof}
Let $(a_0,s_0,p_0)\not\in  \PP$. We are looking for a complex hyperplane passing through $(a_0,s_0,p_0)$ and omitting $\mathcal P$. 

Assume first that $(s_0,p_0)\not\in \GG_2$. Since $\GG_2$ is $\CC$-convex (see \cite[Theorem~1]{Nik-Pfl-Zwo 2008}), it is linearly convex, so there is complex line $l$ passing through $(s_0,p_0)$ and omitting $\GG_2$. Then $\CC\times l$ is a complex hyperplane we are looking for.

Now suppose that $(s_0,p_0)\in \GG_2$. Applying Theorem~\ref{th:severaldescriptions} we get that there is $z\in \overline \DD$ and $\omega\in\CC\setminus \DD$ such that  $\Psi_{z_0}(a_0, s_0, p_0)=\omega$ and $\Psi_{z_0}(a,s,p)\neq \omega$ for all $(a,s,p)\in \PP$. Thus $\{(a,s,p)\in \CC^3:\ \Psi_z(a,s,p)=\omega\}$ describes a hyperplane satisfying the desired properties.
\end{proof}

\subsection{Geometry of the symmetrised bidisc}
Recall that the symmetrised bidisc is defined as $$\GG_2=\{(\lambda_1 + \lambda_2, \lambda_1 \lambda_2):\ \lambda_1,\lambda_2\in \DD\}.$$ Its Shilov boundary $\partial_s \GG_2$ is given by $\partial_s \GG_2=\{(\lambda_1 + \lambda_2, \lambda_1 \lambda_2):\ \lambda_1, \lambda_2\in \TT\}.$ 

Let $\Sigma$ denote the \emph{royal variety} of the symmetrised bidisc, i.e. $\Sigma=\{(2\lambda, \lambda^2):\ \lambda\in \DD\}$.

In the sequel we will make use of the description of the set of proper holomorphic self-mappings of the symmetrised bidisc due to Edigarian and Zwonek:
\begin{theorem}[\cite{Edi-Zwo 2005}, Theorem~1]\label{Edi-Zwo}
Let $f:\GG_2\to \GG_2$ be proper and holomorphic. Then there is a finite Blaschke product $b$ such that $$f(\lambda_1+\lambda_2, \lambda_1 \lambda_2) = (b(\lambda_1) + b(\lambda_2), b(\lambda_1) b(\lambda_2)),\quad \lambda_1, \lambda_2\in \DD.$$
\end{theorem}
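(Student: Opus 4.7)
The plan is to lift $f$ to a proper holomorphic self-map $\tilde f$ of the bidisc $\DD^2$ and then apply Rudin's classification of such maps. Let $\sigma\colon\DD^2\to\GG_2$ denote the symmetrisation $\sigma(\lambda_1,\lambda_2)=(\lambda_1+\lambda_2,\lambda_1\lambda_2)$. This is a proper holomorphic branched covering of degree two, branched along the diagonal, whose image is precisely the royal variety $\Sigma$.

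Given a proper holomorphic $f\colon\GG_2\to\GG_2$, the composition $g=f\circ\sigma\colon\DD^2\to\GG_2$ is proper, holomorphic, and invariant under the swap $(\lambda_1,\lambda_2)\mapsto(\lambda_2,\lambda_1)$. I would seek a holomorphic lift $\tilde f=(b_1,b_2)\colon\DD^2\to\DD^2$ with $\sigma\circ\tilde f=g$, which amounts to selecting holomorphic roots of $T^2-g_1(\lambda)T+g_2(\lambda)$. By the implicit function theorem such a lift exists locally off the analytic set $g^{-1}(\Sigma)=\{g_1^2-4g_2=0\}$; the obstruction to a global lift is the monodromy of the root selection around $g^{-1}(\Sigma)$. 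The properness of $f$ is precisely what one needs to conclude this monodromy is trivial: since $\sigma$ has ramification order two along $\Sigma$ on both the source and target side, the composed map $g$ must have even order of vanishing of its discriminant along each component of $g^{-1}(\Sigma)$, and a standard monodromy calculation then shows that the pullback of the double cover $\sigma$ to $\DD^2\setminus g^{-1}(\Sigma)$ is trivial. One thus obtains $\tilde f$ on the complement, and extends it across $g^{-1}(\Sigma)$ by Riemann's removable singularity theorem.

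With the lift in hand, $\tilde f\colon\DD^2\to\DD^2$ is proper holomorphic, so by Rudin's theorem it has the form $\tilde f(\lambda_1,\lambda_2)=(B_1(\lambda_{\tau(1)}),B_2(\lambda_{\tau(2)}))$ for some permutation $\tau\in S_2$ and finite Blaschke products $B_1,B_2$. The swap-invariance of $g=\sigma\circ\tilde f$ forces the unordered pair $\{B_1(\lambda_{\tau(1)}),B_2(\lambda_{\tau(2)})\}$ to be unchanged under exchanging $\lambda_1$ and $\lambda_2$, which is possible only when $B_1=B_2=:b$. The identity $\sigma\circ\tilde f=f\circ\sigma$ then yields the claimed formula.

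The main obstacle is the construction of the global lift, and within that the monodromy/parity computation along $g^{-1}(\Sigma)$; once the lift is produced, everything else is a clean combination of Rudin's theorem with the symmetry of $g$.
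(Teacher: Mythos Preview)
The paper does not prove this theorem; it is quoted from \cite{Edi-Zwo 2005} and used as a tool. Your overall strategy---lift through the symmetrisation $\sigma$ to a proper self-map of the bidisc, classify via Rudin's theorem, and then use swap-symmetry to force the two Blaschke factors to coincide---is indeed the route taken in the cited reference, and the last two steps of your outline are fine.

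The genuine gap is in your monodromy step. You assert that ``since $\sigma$ has ramification order two along $\Sigma$ on both the source and target side, the composed map $g$ must have even order of vanishing of its discriminant along each component of $g^{-1}(\Sigma)$.'' This is not justified. Write $h(s,p)=f_1(s,p)^2-4f_2(s,p)$, so that the discriminant of $g=f\circ\sigma$ is $h\circ\sigma$. The ramification of the \emph{source} copy of $\sigma$ doubles vanishing orders only along the diagonal $\Delta=\sigma^{-1}(\Sigma)$; along any component of $\sigma^{-1}(f^{-1}(\Sigma))$ lying over a component $C\neq\Sigma$ of $f^{-1}(\Sigma)$, the map $\sigma$ is a local biholomorphism and the vanishing order of $h\circ\sigma$ there equals that of $h$ along $C$, which is a priori arbitrary. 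The ramification of the \emph{target} copy of $\sigma$ governs the branching of the cover you are pulling back, not the divisibility of any function on the base. So your parity argument only works once you already know $f^{-1}(\Sigma)=\Sigma$.

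That equality is in fact the substantive lemma in \cite{Edi-Zwo 2005}, and it is precisely where properness of $f$ is used: one shows, via an analysis of the branch locus of a proper map (equivalently, of how $f$ interacts with the singular stratum of the quotient $\DD^2/S_2$), that a proper holomorphic self-map of $\GG_2$ must satisfy $f^{-1}(\Sigma)=\Sigma$. With that in hand your monodromy argument goes through immediately, since then $g^{-1}(\Sigma)=\Delta$ and the source ramification makes the vanishing order even; the rest of your outline is correct.
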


\subsection{Geometric properties of the topological boundary of the pentablock}

We shall divide the boundary of the pentablock on the three parts $\partial \mathcal P= \partial_1 \mathcal P \cup \partial_2 \mathcal P\cup \partial_3 \mathcal P$, where $\partial_1 \mathcal P= \partial \mathcal P\cap \mathbb C\times \GG_2$, $\partial_2 \mathcal P = \partial \mathcal P \cap (\CC\times  (\partial \GG_2\setminus \partial_s \GG_2))$ and $\partial_3 \mathcal P = \partial \mathcal P \setminus (\partial_1 \PP \cup \partial_2 \PP)$. Let us list few properties of them:

1) It is clear that $$\partial_1\PP=\{(a,s,p)\in \CC \times \mathbb G_2:\ |a|^2=e^{-\varphi(s,p)}\}.$$ Topological codimension of $\partial_1 \PP$ is equal to $1$ and any point $x$ of $\partial_1 \PP$ is a smooth point of $\partial \PP$, what is more $r(a,s,p)=\log |a|^2 +\varphi(s,p)$ is a local defining function of $\partial \PP$ in a neighborhood of $x$. We shall show
\begin{lemma}\label{lem:par1} $\partial_1 \PP$ is not Levi flat (i.e. the Levi form of the defining function $r$ restricted to the complex tangent space is not degenerate on $\partial_1 \PP$). Moreover $\partial_1 \PP$ may be foliated with analytic discs.
\end{lemma}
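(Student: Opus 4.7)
The plan is to handle the two assertions separately. For the Levi non-flatness, using the local defining function $r = \log|a|^2 + \varphi(s,p)$, one observes that the complex Hessian $\partial\bar\partial r$ is block diagonal: the $a$-block vanishes (because $\log|a|^2$ is pluriharmonic on $\{a \neq 0\}$), the mixed $a$-$(s,p)$ entries vanish, and only the complex Hessian of $\varphi$ in the $(s,p)$-block survives. On the complex tangent space $X_a$ is a linear function of the free pair $(X_s, X_p) \in \CC^2$, so the Levi form of $\partial_1 \PP$ coincides with the complex Hessian of $\varphi$. Consequently $\partial_1 \PP$ is Levi flat if and only if $\varphi$ is pluriharmonic on $\GG_2$, and this is ruled out by Remark~\ref{rem:phi}.

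For the foliation I would lift to the Cartan domain. Fix $(a_0, s_0, p_0) \in \partial_1 \PP$ and any preimage $z_0 \in \pi^{-1}(a_0,s_0,p_0) \subset \overline{\mathcal R_I}$; since $(a_0, s_0, p_0) \notin \PP = \pi(\mathcal R_I)$, we must have $z_0 \in \partial \mathcal R_I$, that is $\|z_0\| = 1$. The hypothesis $(s_0, p_0) \in \GG_2$ means that the two eigenvalues of $z_0$ both lie in $\DD$, so its singular values are exactly $1$ and $|p_0| \in [0,1)$. Writing the singular value decomposition $z_0 = U\,\mathrm{diag}(1,|p_0|)\,V^*$ I would introduce the affine analytic disc
$$ D(\zeta) := U\,\mathrm{diag}(1,\zeta)\,V^*, \qquad \zeta \in \overline{\DD}, $$
which stays in $\partial \mathcal R_I$ throughout, and whose projection $\phi := \pi \circ D : \overline{\DD} \to \overline{\PP}$ is affine (in particular nonconstant, since its third component equals $\zeta \det(V^*U)$ with $|\det(V^*U)| = 1$) and passes through $(a_0, s_0, p_0)$ at the interior value $\zeta = |p_0|$.

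The core of the argument is then to show $\phi(\DD) \subset \partial_1 \PP$. First, $(\tr D(\zeta), \det D(\zeta)) \in \GG_2$ for every $\zeta \in \DD$: a hypothetical eigenvalue of $D(\zeta)$ of modulus one would come with a unit eigenvector on which $D(\zeta)$ preserves length, hence a top right singular vector; for $|\zeta|<1$ the unique such direction is $Ve_1$, forcing the first columns of $U$ and $V$ to be parallel. But then $z_0 v_1 = U\,\mathrm{diag}(1,|p_0|) e_1 = u_1$ would be parallel to $v_1$, giving $z_0$ itself a unimodular eigenvalue, contradicting $(s_0, p_0) \in \GG_2$. Second, $\phi$ stays on $\partial \PP$: the composition $r \circ \phi$ is subharmonic on $\DD$ (because $\varphi$ is plurisubharmonic by its $\CC$-convexity, Proposition~\ref{con1}, and $\log|a(\zeta)|^2$ is subharmonic), it is bounded above by $0$, and it equals $0$ at the interior point $\zeta = |p_0|$; the maximum principle pins it at $0$.

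The delicate step I expect to be the main obstacle is the first of these two verifications: while $D(\zeta)$ always sits on $\partial \mathcal R_I$, nothing a priori prevents its projection from slipping into $\partial_2 \PP \cup \partial_3 \PP$ at values of $\zeta$ where an eigenvalue of $D(\zeta)$ touches $\TT$. The coincidence of an eigenvector with the uniquely determined top singular direction is exactly the mechanism that excludes this, and it relies crucially on the strict inclusion $(s_0, p_0) \in \GG_2$ rather than merely $\overline{\GG_2}$. Once both verifications are in place, every point of $\partial_1 \PP$ lies on a nonconstant affine analytic disc contained in $\partial_1 \PP$, yielding the claimed foliation.
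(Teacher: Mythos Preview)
Your argument follows the same route as the paper's: the Levi non-flatness is reduced to the non-pluriharmonicity of $\varphi$ (Remark~\ref{rem:phi}), and the analytic discs are obtained by projecting the singular-value-decomposition discs $\zeta \mapsto U\,\mathrm{diag}(1,\zeta)\,V^*$ from $\partial \mathcal R_I$ and then applying the maximum principle via the plurisubharmonicity of $\varphi$. Your treatment is in fact more thorough than the paper's, which does not explicitly verify that the projected disc remains in $\CC \times \GG_2$ (your eigenvector/singular-vector argument); the paper leaves that step implicit.
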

\begin{remark}
The Reader not familiar with Levi forms may read the above lemma in the following way:  there are no 2-dimensional analytic discs in $\partial_1 \PP$ and $\partial_1\PP$ may be foliated with 1-dimensional analytic discs.
\end{remark}

\begin{proof}[Proof of Lemma~\ref{lem:par1}] Take $x\in \partial_1\PP$ and $z\in \partial \mathcal R_I$ such that $\pi(z)=x$. By the singular value decomposition theorem, there are unitary matrices $U$ and $V$ and $\zeta_0\in \overline\DD$ such that $z=U\left(\begin{array}{cc} 1 & 0\\ 0 & \zeta_0 \end{array}\right)V$. Since $x\in \overline \DD\times \GG_2$, $\zeta_0\in \DD$. Then $t\mapsto \pi(U\left(\begin{array}{cc} 1 & 0\\ 0 & t \end{array}\right)V)$ is a non-trivial analytic disc in $\bar \PP$ passing through $x$. Since $\varphi$ is plurisubharmonic (see e.g. Proposition~\ref{con1}), we find that this analytic disc lies in $\partial \PP$. 

To get the desired assertion it suffices to show that the Levi form of $r$ is not equal to $0$. Of course this fact may be checked directly. But to avoid tedious computations it suffices to note that otherwise the function $\varphi$ would be pluriharmonic. But this contradicts Remark~\ref{rem:phi}.
\end{proof}

2) Clearly 
\begin{multline}\nonumber \partial_2 \PP=\{ (a,\lambda_1 + \lambda_2, \lambda_1 \lambda_2)\in \CC \times \bar\GG_2:\  (\lambda_1,\lambda_2)\in (\DD\times \TT) \cup (\TT \times \DD), \\ |a|< \frac{1}{2} |1- \bar \lambda_2 \lambda_1| + \frac{1}{2} (1-|\lambda_1|^2)^{\frac{1}{2}} (1-|\lambda_2|^2)^{\frac{1}{2}}\}.
\end{multline}
Of course, the topological codimension of $\partial_2 \PP$ is equal to $1$.
\begin{lemma}\label{lem:par2}
$\partial_2\PP$ is a Levi flat part of $\partial \PP$.
\end{lemma}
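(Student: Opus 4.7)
The plan is to exhibit an explicit holomorphic foliation of $\partial_2 \PP$ by complex hypersurfaces of $\CC^3$; this immediately forces the Levi form to vanish identically on $\partial_2 \PP$. The shortest route is to first establish the corresponding statement in the base $\GG_2$ and then lift it to $\CC^3$.

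First, I would observe that by its very definition $\partial_2 \PP$ is an open subset of the real hypersurface $\CC \times (\partial \GG_2 \setminus \partial_s \GG_2) \subset \CC^3$. Since Levi flatness is a local property, preserved both under passage to open subsets and under taking products with a complex Euclidean factor, it is enough to prove that $M := \partial \GG_2 \setminus \partial_s \GG_2$ is a Levi-flat hypersurface in $\CC^2$.

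To foliate $M$, I would fix $\nu \in \TT$ and consider the holomorphic embedding
\begin{equation*}
\iota_\nu : \DD \ni \mu \longmapsto (\mu + \nu, \mu\nu) \in \CC^2.
\end{equation*}
Its image lies in $M$, being the symmetrisation of a pair in $\DD \times \TT$. Since every $(s,p) \in M$ determines the unordered pair $\{\lambda_1, \lambda_2\}$ of roots of $w^2 - sw + p = 0$, of which exactly one lies in $\TT$ and exactly one in $\DD$, the discs $\iota_\nu(\DD)$ are pairwise disjoint and their union is all of $M$. This yields a foliation of $M$ by complex $1$-dimensional submanifolds, which is the Levi flatness of $M$.

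Lifting to $\CC^3$, for each $\nu \in \TT$ the set
\begin{equation*}
L_\nu := \bigl\{(a, \mu+\nu, \mu\nu) \in \CC^3 : \mu \in \DD,\ |a| < \tfrac12 |1 - \bar\nu \mu|\bigr\}
\end{equation*}
is an open subset of the complex $2$-manifold $\CC \times \iota_\nu(\DD)$, hence itself a complex $2$-manifold contained in $\partial_2 \PP$ (recall that the second term in the bound defining $\partial_2 \PP$ vanishes as soon as $\nu \in \TT$). The family $\{L_\nu\}_{\nu \in \TT}$ foliates $\partial_2 \PP$ by complex hypersurfaces of $\CC^3$, which is precisely the Levi flatness of $\partial_2 \PP$. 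The only point requiring verification is the disjointness and covering property of the leaves, and this reduces to the uniqueness of the unordered root pair of $w^2 - sw + p$, which is immediate; no Levi-form computation is needed.
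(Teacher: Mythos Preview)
Your proof is correct and uses essentially the same foliation as the paper: through a point of $\partial_2\PP$ with unimodular root $\nu$, the paper writes down the $2$-dimensional analytic disc $(a,\lambda)\mapsto(a,\nu+\lambda,\nu\lambda)$ directly, which is precisely your leaf $L_\nu$. The only difference is presentational---you factor the argument through the Levi-flatness of $\partial\GG_2\setminus\partial_s\GG_2$ in $\CC^2$ before lifting, whereas the paper works in $\CC^3$ from the start in a single line.
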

\begin{remark}
The above lemma may be stated without using the notion of Levi forms in the following way: $\partial_2 \PP$ may be foliated with 2-dimensional analytic discs.
\end{remark}

\begin{proof}[Proof of Lemma~\ref{lem:par2}] Let $x=(a_0, s_0, p_0)=(a_0, \lambda_1 + \lambda_2, \lambda_1 \lambda_2 ) \in \partial_2 \PP$. Losing no generality we may assume that $|\lambda_1|=1$ and $|\lambda_2|<1$. Then $(a,\lambda)\mapsto (a, \lambda_1 + \lambda, \lambda_1 \lambda)$ defined in a neighborhood of $(a_0, \lambda_2)$ lies in $\partial_2 \PP$.
\end{proof}

3) Topological codimension of $\partial_3 \PP$ is equal to 2.

\section{Group of automorphisms of $\mathcal P$}

Recall that the special subgroup of the group of automorphisms of the pentablock was constructed in \cite{Agl-Lyk-You14}. More precisely, it was shown that any mapping of the form
\begin{multline}\label{con} f_{\omega, \nu}(a, \lambda_1 + \lambda_2, \lambda_1 \lambda_2) =\\ \left(\frac{\omega \eta (1-|\alpha|^2) a}{1-\bar \alpha (\lambda_1 + \lambda_2) +\bar \alpha^2\lambda_1 \lambda_2}, \nu(\lambda_1)+\nu(\lambda_2), \nu(\lambda_1) \nu(\lambda_2) \right),\end{multline}
where $\omega\in \TT$ and $\nu$ is a M\"obius function of the form $\nu(\lambda) = \eta \frac{\lambda - \alpha}{1 - \bar \alpha \lambda},$ $\lambda\in \DD$, is an automorphism of the pentablock.

In what follows we shall show that above mappings form the whole group of automorphisms of the pentablock:
\begin{theorem}\label{autp} $$\Aut(\PP)= \{f_{\omega,\nu}:\ \omega\in \mathbb T,\ \nu\text{ is a M\"obius function}\}.$$ In particular, $\{0\}\times \Sigma$ is an orbit of $0$ and, consequently, the pentablock is inhomogeneous.
\end{theorem}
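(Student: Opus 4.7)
Given $F\in\Aut(\PP)$, the plan is to reduce $F$, by composing with a suitable $f_{\omega,\nu}$, to a rotation in the $a$-variable, which is itself one of the $f_{\omega,\id}$. The argument splits into (i) showing that $F$ respects the Hartogs projection $\pi\colon\PP\to\GG_2$, $(a,s,p)\mapsto(s,p)$, in the sense that $\pi\circ F=\Phi\circ\pi$ for some $\Phi\in\Aut(\GG_2)$; and (ii) classifying the automorphisms that fibre over the identity of $\GG_2$.

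For (i), my approach exploits the boundary stratification $\partial\PP=\partial_1\PP\cup\partial_2\PP\cup\partial_3\PP$ together with Lemmas~\ref{lem:par1} and~\ref{lem:par2}. The stratum $\partial_2\PP$ is characterised intrinsically as the locus carrying $2$-dimensional analytic discs, so any automorphism---which, $\PP$ being bounded, extends continuously to the smooth pieces of the boundary---must permute the $2$-dimensional leaves $L_\mu=\{(a,\mu+\lambda,\mu\lambda):\lambda\in\DD,\ |a|<\tfrac12|1-\bar\lambda\mu|\}$, $\mu\in\TT$. These leaves project under $\pi$ to the $1$-dimensional flat discs $D_\mu\subset\partial\GG_2\setminus\partial_s\GG_2$. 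The crux, and the main obstacle of the proof, is to promote this induced permutation of boundary leaves to a holomorphic automorphism $\Phi\in\Aut(\GG_2)$ satisfying $\pi\circ F=\Phi\circ\pi$; this combines the boundary regularity of $F$, the specific foliation structure, and the accumulation of the leaves on $\partial_s\GG_2$.

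With (i) in hand, Theorem~\ref{Edi-Zwo} applied to $\Phi$ and $\Phi^{-1}$ yields $\Phi(\lambda_1+\lambda_2,\lambda_1\lambda_2)=(\nu(\lambda_1)+\nu(\lambda_2),\nu(\lambda_1)\nu(\lambda_2))$ for some M\"obius $\nu$. Set $H:=f_{1,\nu^{-1}}\circ F$; then $H$ preserves each fibre of $\pi$, so $H(a,s,p)=(g(a,s,p),s,p)$ with $g(\cdot,s,p)\in\Aut(\DD_{R(s,p)})$ for $R=e^{-\varphi/2}$. Writing the M\"obius form $g=(Aa+B)/(Ca+1)$ with $A,B,C$ holomorphic on $\GG_2$ (the denominator is nonzero at $a=0$), the boundary identity $|g|=R$ on $|a|=R$ gives the coefficient relation $A\bar B=R^2 C$, i.e. $C=A\bar B e^{\varphi}$. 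Applying $\bar\partial$ and using $\bar\partial A=0$ forces $\bar\partial\bar B+\bar B\,\bar\partial\varphi=0$. If $B\not\equiv 0$, integrating on $\{B\ne 0\}$ gives $\varphi=\re H_0-\log|B|$ for some locally holomorphic $H_0$, making $\varphi$ pluriharmonic on a dense open set---contradicting Remark~\ref{rem:phi}. Hence $B\equiv 0$, so $C\equiv 0$, and the remaining coefficient equation forces $|A|\equiv 1$; the maximum principle then yields $A\equiv\omega\in\TT$. Thus $H=f_{\omega,\id}$, and $F$ lies in the family \eqref{con} by its closure under composition and inversion.

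Finally, $f_{\omega,\nu}(0,0,0)=(0,-2\eta\alpha,\eta^2\alpha^2)=(0,2\mu,\mu^2)$ with $\mu=-\eta\alpha$; as $(\eta,\alpha)$ ranges over $\TT\times\DD$ the parameter $\mu$ covers $\DD$, so the orbit of the origin is precisely $\{0\}\times\Sigma$, a $1$-dimensional variety inside the $3$-dimensional pentablock, proving inhomogeneity.
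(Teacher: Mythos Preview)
Your argument has a genuine gap at step~(i). The assertion $\pi\circ F=\Phi\circ\pi$ says that the last two components of $F$ are independent of $a$ on all of $\PP$; you do not prove this, and you explicitly flag it as ``the crux, and the main obstacle'' without resolving it. Two concrete problems: first, boundedness of $\PP$ alone does not give you any boundary regularity of $F$---the paper needs a separate lemma (holomorphic extension past $\overline{\PP}$ via the quasi-balanced structure, \cite{Kos 2009}) to work at the boundary at all. Second, even granting that $F$ extends and permutes the Levi leaves $L_\mu\subset\partial_2\PP$, this only tells you that $\pi\circ F$ maps each $L_\mu$ into some boundary disc $D_{\rho(\mu)}$; it does not force $\pi\circ F(a,s,p)$ to be constant along the $a$-fibres, nor does it say anything about points of $\PP$ itself. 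Promoting the boundary permutation to a global fibering identity is precisely the missing step, and it is not a formality.

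The paper avoids this difficulty entirely. It never claims $F$ fibres over $\pi$; instead it looks only at the slice map $\Psi(s,p)=(F_2(0,s,p),F_3(0,s,p))$, uses the Levi-flat/non-flat dichotomy (Lemmas~\ref{lem:par1}--\ref{lem:par2}) together with the holomorphic extension lemma to see that $\Psi$ is proper on $\GG_2$, and then applies Theorem~\ref{Edi-Zwo}. After normalising so that $\Psi=\id$ and $F(0)=0$, the paper exploits the $(2,1,2)$- and $(1,1,2)$-balanced structure of $\PP$ to manufacture one-parameter families $F_\lambda$ of automorphisms and pass to the limit $\lambda\to 0$; this forces $F'(0)=\id$, and Cartan's theorem finishes. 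Your step~(ii) is an attractive alternative to this balanced/Cartan endgame---the $\bar\partial$ computation combined with Remark~\ref{rem:phi} is clean---but it only becomes available once the fibering $\pi\circ F=\Phi\circ\pi$ is in hand, and that is exactly what you have not established.
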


The crucial tools used in deducing the result presented above involve elementary properties of biholomorphic mappings between quasi-balanced domains, the description of proper holomorphic self-mappings of the symmetrised bidisc and the classical Cartan theorem.

We start with the following result:
\begin{lemma}
Any automorphism of the pentablock extends to a biholomorphic mapping between some neighborhoods of $\overline{\PP}$.
\end{lemma}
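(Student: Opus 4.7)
The strategy is to upgrade $F$ in two stages: first to a continuous map on $\overline{\PP}$, and then to a holomorphic (indeed biholomorphic) map on a neighborhood of $\overline{\PP}$ in $\CC^3$. Both stages are controlled by the stratification $\partial\PP=\partial_1\PP\cup\partial_2\PP\cup\partial_3\PP$ introduced above. The key structural facts I would use are: $\PP$ is bounded and linearly convex (Proposition~\ref{prop:lin}), hence pseudoconvex and hyperbolic; $\partial_1\PP$ is a real-analytic hypersurface with defining function $r(a,s,p)=\log|a|^2+\varphi(s,p)$ that is real-analytic in a neighborhood of $\CC\times\GG_2$ (since $\varphi$ from \eqref{pentaphi} is real-analytic on $\GG_2$, thanks to $|\beta|<1$ throughout $\GG_2$); and $\partial_3\PP$ has real codimension~$2$.

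For the continuous extension to $\overline{\PP}$, the boundedness and hyperbolicity of $\PP$ together with the codimension-$2$ nature of $\partial_3\PP$ reduce the problem to continuous extension across the smooth strata $\partial_1\PP$ and $\partial_2\PP$, which is then available via standard cluster-set/Lindel\"of arguments for bounded hyperbolic domains with $\mathcal C^1$ pseudoconvex boundaries. The main step is the holomorphic extension across $\partial_1\PP$: by Lemma~\ref{lem:par1} the Levi form of $r$ is somewhere non-degenerate, and by real-analyticity combined with pseudoconvexity it is in fact positive definite on an open dense subset of $\partial_1\PP$. At each strictly pseudoconvex real-analytic boundary point the Lewy-Pinchuk reflection principle yields a local holomorphic extension of $F$; uniqueness of analytic continuation glues these into a holomorphic map $\tilde F$ on an open set $U\supset\PP\cup\partial_1\PP$, and applying the same argument to $F^{-1}$ makes $\tilde F$ a local biholomorphism on $U$.

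To reach a neighborhood of all of $\overline{\PP}$, I would finally extend across $\partial_2\PP$ and $\partial_3\PP$. Since $\partial_3\PP$ has real codimension $2$ in $\CC^3$, a standard Hartogs-type removable-singularity argument applied to the bounded holomorphic $\tilde F$ fills in across it. The Levi-flat stratum $\partial_2\PP$ is real-analytic (locally cut out by $|\lambda_1|=1$ or $|\lambda_2|=1$ in the bidisc parametrization) and is foliated by $2$-dimensional analytic discs (Lemma~\ref{lem:par2}); continuous extension forces $\tilde F$ to send the leaves of this foliation into the corresponding leaves on the image side, and one can then reflect leafwise by a Schwarz-type argument. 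I expect the most delicate step to be precisely this last one: reflection across a Levi-flat real-analytic hypersurface is not automatic, and one must use the $(k,1,2)$-balanced structure of $\PP$ together with the continuous extension from the first step to verify that $\tilde F$ is compatible with the foliation. The preceding paragraphs already give the extension across the open dense stratum $\partial_1\PP$, so this Levi-flat reflection is the genuine obstacle, whereas continuous extension and the codimension-$2$ fill-in are comparatively routine.
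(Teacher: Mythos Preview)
Your strategy is in principle a reasonable boundary-extension programme, but it is both incomplete and far heavier than what is needed. Two of the three steps you call ``routine'' are not: you do not justify the continuous extension to $\overline{\PP}$ (hyperbolicity and Lindel\"of-type statements give at best non-tangential limits, not a continuous map on the closure), and the Lewy--Pinchuk reflection at a point $x\in\partial_1\PP$ requires knowing that the cluster set of $F$ at $x$ meets $\partial\PP$ in a real-analytic, essentially strictly pseudoconvex piece --- in effect that $F$ respects the stratification --- which you have not established and which is precisely part of what the lemma is meant to feed into. You yourself flag the Levi-flat reflection across $\partial_2\PP$ as ``the genuine obstacle'' and leave it open; that is a real gap, not a technicality, and the hint ``use the $(k,1,2)$-balanced structure'' is not an argument.

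The paper sidesteps all of this. The observation is that $\PP$ is a quasi-balanced (indeed $(1,1,2)$-balanced) domain whose Minkowski functional is continuous: the dilates $\PP_r=\{(ra,rs,r^2p):(a,s,p)\in\PP\}$ are relatively compact in $\PP$ for $r\in(0,1)$. For such domains there is a general extension theorem (\cite[Lemma~6 and Remark~7]{Kos 2009}) asserting that every proper holomorphic self-map extends holomorphically past the closure. One applies this to $\varphi$ and to $\varphi^{-1}$ separately; the identity $\varphi\circ\varphi^{-1}=\id$ then persists on a neighborhood of $\overline{\PP}$ by analytic continuation, so the extension is biholomorphic. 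No boundary stratification, no reflection principle, no Hartogs fill-in is needed --- the quasi-balanced structure does all the work in one stroke.
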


\begin{proof}
Recall (see \cite{Kos 2009}, Lemma~6 and Remark~7) that any proper holomorphic mapping between quasi-balanced domains whose Minkowski functionals are continuous extends holomorphically past neighborhoods of closure of domains. Note that for any $r\in (0,1)$ domains $\mathcal P_r:=\{(ra, rs, r^2p):\ (a,s,p)\in \mathcal P\}$ are relatively compact in $\mathcal P$, so the pentablock satisfies the assumption of \cite[Lemma~6]{Kos 2009}. Therefore, since any automorphism is trivially a proper map, applying \cite[Lemma~6]{Kos 2009} to an automorphism $\varphi$ of the pentablock and to its inverse $\phi^{-1}$ we find that both $\varphi$ and $\varphi^{-1}$ extend holomorphically past $\bar{\mathcal P}$. Moreover, the equality $\varphi\circ \varphi^{-1}=\id$ remains true in a neighborhood of $\bar\PP$, so the extension of $\varphi$ to $\bar\PP$ is a biholomorphic mapping between neighborhoods of $\bar\PP$.
\end{proof}

Now we are ready to find a description of the group of automorphism of the pentablock.

\begin{proof}[Proof of Theorem~\ref{autp}] Let us take $\varphi\in \Aut(\PP)$.

Step~1. Consider the mapping $$\Psi:\mathbb G_2\ni (s,p)\mapsto (\varphi_2(0,s,p), \varphi_3(0,s,p))\in \mathbb G_2.$$ As mentioned above, $\Psi$ extends holomorphically past $\bar\GG_2$. Moreover, any point $(0,s,p)$, where $(s,p)\in \partial \GG_2\setminus \partial_s \GG_2$, lies in a Levi flat part of the boundary of the pentablock, i.e. $(0,s,p)\in \partial_2 \PP$. Thus, $\varphi(0,s,p)$ lies generically in a Levi flat part, as well. Therefore, a simple continuity argument shows that $\Psi$ maps $\partial \GG_2$ into $\partial \GG_2$. In particular, $\Psi$ is a proper holomorphic self-mapping of $\GG_2$.

Step~2. It follows from Theorem~\ref{Edi-Zwo} that there is a Blaschke product $b$ such that $$\Psi(\lambda_1+\lambda_2, \lambda_1 \lambda_2) = (b(\lambda_1) + b(\lambda_2), b(\lambda_1) b(\lambda_2))\quad \lambda_1,\lambda_2\in \DD.$$ A direct consequence of this fact is that $\Psi$ preserves the royal variety of the symmetrised bidisc, i.e. $\Psi(\Sigma)=\Sigma$.

Note that a point $(a,s,p)\in \overline\DD \times (\Sigma\cap \partial \GG_2)$ lies in $\bar\PP$ if and only if $a=0$. This, in particular, means that $\lambda \mapsto \varphi_1(0,2\lambda, \lambda^2)$ vanishes on $\TT$, whence $(s,p)\mapsto \varphi_1(0,s,p)$ vanishes on $\Sigma$. Therefore, 
\begin{equation}\label{a} 
\varphi(0, 2\lambda, \lambda^2) = (0, 2 b(\lambda), b(\lambda)^2),\quad \lambda\in \DD.
\end{equation} 
Moreover, there is a holomorphic map $\alpha_1$ on $\mathbb G_2$ such that $\varphi_1(0,s,p)= (s^2 - 4p) \alpha_1(s,p)$ for $(s,p)\in \GG_2$.

Step~3. Applying \eqref{a} to $\varphi^{-1}$ we get immediately that $b$ is a M\"obius function. Composing $\varphi$ with an automorphism of the form \eqref{con} we may assume that $b(\lambda)=\lambda$, $\lambda\in \DD$.

For now we have shown that, up to a composition with an automorphism of the form \eqref{con}, $$\varphi(0,s,p)=((s^2-4p)\alpha_1(s,p), s,p),\quad (s,p)\in \GG_2.$$
In particular,
\begin{multline}\label{mult:phi}
\varphi(a,s,p)=( (s^2-4p)\alpha_1(s,p) +\alpha_2 a+ a\alpha_3(a,s,p),\\ s+ a\beta_1 + a\beta_2(a,s,p), p+ a \gamma_1 + a\gamma_2(a,s,p))\end{multline} 
for $(a,s,p)\in \PP$, where $\alpha_2,\beta_1,\gamma_1\in \CC$ and $\alpha_3,\beta_2,\gamma_2\in \OO(\PP)$ vanish at the origin.

Step~4. Now we shall make use of the fact that the pentablock is $(2,1,2)$- and $(1,1,2)$-balanced. For $m=(m_1,m_2,m_3)\in \NN^3$ and $\lambda \in \CC$ denote the action on $\CC^3$ $$m_\lambda.x=(\lambda^{m_1} x_1, \lambda^{m_2} x_2,\lambda^{m_3} x_3).$$

Let $m=(2,1,2)$. It follows from Section~\ref{sec:act} that for any $x\in \PP$ and $\lambda\in \DD$ the point $m_\lambda.x$ lies in $\PP$. For $\lambda$ in the pointed disc let us define $$\phi_\lambda: (a,s,p)\mapsto m_{\lambda^{-1}}.\phi(m_\lambda.(a,s,p)).$$ Note that for any unimodular $\lambda$ the mapping $\phi_\lambda$ is an automorphism of $\mathcal P$. Moreover, $(\phi_\lambda)^{-1} = (\phi^{-1})_\lambda$, $\lambda\in \TT$. 

For a fixed $(a,s,p)\in\mathcal P$ the mappings $\lambda\mapsto \varphi_\lambda(a,s,p)$ and $\lambda\mapsto \varphi^{-1}_\lambda (a,s,p)$ are analytic on $\overline\DD\setminus \{0\}$. Moreover, making use of formula \eqref{mult:phi} (which may be applied to $\varphi^{-1}$ as well) we find that these mappings extend holomorphically past $0$, let say to $\varphi_0$ and $\varphi_0^{-1}$ respectively. Clearly, the equalities $\varphi_\lambda \circ \varphi_\lambda^{-1}=\id$ and $\varphi^{-1}_\lambda \circ \varphi_\lambda=\id$ hold on $\TT$, so thanks to the analicity with respect to $\lambda$ they remain true on the whole disc.

Thus, for a fixed $(a,s,p)\in \PP$ the mappings $\lambda\mapsto \varphi_\lambda(a,s,p)$ and $\lambda\mapsto \varphi^{-1}_\lambda (a,s,p)$ are analytic on a neighborhood of $\overline \DD$ and they map $\TT$ into the pentablock. From this we deduce that they map $\DD$ into the pentablock (it is a direct consequence of the polynomial convexity of $\PP$ -- see \cite[Theorem~6.3]{Agl-Lyk-You14}).

In particular, letting $\lambda\to 0$ we find that $\varphi_0$ and $\varphi_0^{-1}$ are automorphisms of the pentablock.

Using of \eqref{mult:phi} one may compute the formula for $\varphi_0$ to deduce that the mapping $$(a,s,p)\mapsto ((s^2-4p)\alpha_1(0,0) + \alpha_2 a, s, \gamma_1 a +p)$$ is an automorphism of the pentablock. Putting $s=0$ and making use of the description of the pentablock (note that $\PP\cap (\CC\times \{0\}\times \CC) =\DD\times \{0\} \times \DD$, by Theorem~\ref{th:severaldescriptions} (2)) we infer that $$(a,p)\mapsto (-4p\alpha_1(0,0) + \alpha_2 a, \gamma_1 a +p)$$ is an automorphism of the bidisc. Therefore $\gamma_1=\alpha_1(0,0)=0$ and $|\alpha_2|=1$. Losing no generality we may assume that $\alpha_2=1$ (compose, if necessary, $\varphi$ with $(a,s,p)\mapsto (\alpha_2^{-1} a, s , p)$).

Applying the same reasoning to $m=(1,1,2)$ (it is possible as $\gamma_1=0$) we find that the following mapping $$(a,s,p)\mapsto (a, \beta_1 a +s, p+\gamma_3 as + \gamma_4 a^2)$$ is an automorphism of $\PP$ for some $\gamma_3,\gamma_4\in \CC$. In particular, putting $s=0$ we see that for any $a,p\in \DD$ the point $(a,\beta_1 a, p+ \gamma_4 a^2)$ lies in the pentablock. In particular, $(1,\beta_1 ,p + \gamma_4)$ lies in $\bar\PP$ for any $p\in \DD$. This means, that there is $z\in \bar{\mathcal R}_I$ such that $\pi(z)=(1,\beta_1,p + \gamma_4)$. Since $z_{12}=1$ we see that $z_{11}=z_{22}=0$, whence $\beta_1=0$.

Step~5. Note that we have shown that $\varphi'(0)=\id$. Since $\varphi$ preserves the origin, the assertion is a direct consequence of the classical Cartan theorem.

The second part of the theorem is clear.
\end{proof}

\section{Retracts of $\mathcal J^*$ algebras}

Finally we shall prove that the pentablock is not an analytic retract of the open unit ball of any $\mathcal J^*$ algebra of finite rank (see \cite{Har} for a definition of a $\mathcal J^*$ algebra). A domain $P$ is said to be an \emph{analytic retract} of a domain $B$ if there are holomorphic mappings $\phi:P\to B$ and $\psi:B\to P$ such that $\psi\circ \phi = \id$.
  
Recall a result of N.~Young who showed in \cite{You 2007} that the tetrablock (another domain appearing in the $\mu$-synthesis problem) possesses this property. Some analysis of his proof shows that it works for the symmetrised bidisc as well. Actually, the main ingredient of Young's proof is to show that there is no isometry between the unit ball in $\CC^3$ with the norm $||x||_{\mathbb E}:=\max(|x_1|,|x_2|) + |x_3|$ and the mentioned unit ball in the $\mathcal J^*$ algebra of finite rank. However, the argument used there (and this is actually the core of the idea) shows that such an isometry does not exist if $\CC^3$ equipped with $||\cdot||_{\mathbb E}$ will be replaced with $\CC^2$ equipped with the norm $||x||_s=|x_1|+|x_2|$. On the other hand the indicatrix of the symmetrised bidisc at the origin is clearly isomorphic with the unit ball in $(\CC^2, ||\cdot||_s)$.

Using this we may simply prove that the same property holds for $\PP$:
\begin{proposition}\label{retrp}
The pentablock is not an analytic retract of the open unit ball of a $\mathcal J^*$ algebra of finite rank.
\end{proposition}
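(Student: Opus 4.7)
The plan is to reduce the statement to the corresponding non-retract property for the symmetrised bidisc $\GG_2$, which is already established in the paragraph preceding the proposition via the adaptation of Young's argument from \cite{You 2007}. The whole argument rests on the observation that $\GG_2$ sits inside $\PP$ as a holomorphic retract, so any realization of $\PP$ as an analytic retract of $B$ would automatically exhibit $\GG_2$ as an analytic retract of $B$ as well.

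First I would verify that $\GG_2$ is an analytic retract of $\PP$. The projection $\pi:\PP\ni (a,s,p)\mapsto (s,p)\in \GG_2$ is holomorphic and takes values in $\GG_2$, since $\PP$ is the Hartogs domain over $\GG_2$ described by \eqref{penta}. The embedding $\iota:\GG_2\ni (s,p)\mapsto (0,s,p)\in \PP$ is also well defined: by description (3) in Theorem~\ref{th:severaldescriptions}, the bound on $|a|$ reads $\tfrac12|1-\bar\lambda_2\lambda_1|+\tfrac12(1-|\lambda_1|^2)^{1/2}(1-|\lambda_2|^2)^{1/2}$, which is strictly positive for $\lambda_1,\lambda_2\in\DD$. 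Obviously $\pi\circ\iota=\id_{\GG_2}$.

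Next, I would suppose toward a contradiction that $\PP$ is an analytic retract of the open unit ball $B$ of a $\mathcal J^*$ algebra of finite rank, with holomorphic maps $\phi:\PP\to B$ and $\psi:B\to\PP$ satisfying $\psi\circ\phi=\id_\PP$. Composing, the maps $\phi\circ\iota:\GG_2\to B$ and $\pi\circ\psi:B\to\GG_2$ are holomorphic and satisfy $(\pi\circ\psi)\circ(\phi\circ\iota)=\pi\circ\iota=\id_{\GG_2}$, so $\GG_2$ would be an analytic retract of $B$ as well.

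Finally I would invoke the fact already stated in the paragraph preceding the proposition: the indicatrix of $\GG_2$ at the origin is linearly isomorphic to the unit ball of $(\CC^2,\|\cdot\|_s)$ with $\|x\|_s=|x_1|+|x_2|$, while the variant of Young's argument rules out any linear isometric embedding of this ball into the unit ball of a $\mathcal J^*$ algebra of finite rank; consequently $\GG_2$ is not an analytic retract of such a ball. This contradicts the previous paragraph, proving the proposition. The real obstacle is the non-embedding step at the level of indicatrices, which is precisely the ingredient imported from \cite{You 2007}; the remainder of the argument is formal.
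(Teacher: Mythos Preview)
Your argument is correct and follows exactly the paper's approach: both proofs exhibit $\GG_2$ as a retract of $\PP$ via the inclusion $(s,p)\mapsto(0,s,p)$ and the projection $(a,s,p)\mapsto(s,p)$, and then compose with the hypothetical retraction maps to contradict the non-retract property of $\GG_2$ already established from Young's argument. The only difference is that you spell out in more detail why the inclusion and projection are well defined and recall what the imported obstruction from \cite{You 2007} actually is.
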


\begin{proof}
Suppose a contrary, i.e. there is a $\mathcal J^*$ algebra $\mathcal A$ and there are analytic mappings $\varphi:\PP \to \mathcal B$ and $\psi:\mathcal B\to \PP$ such that $\psi\circ \phi = \id,$ where $\mathcal B$ is the unit ball in $\mathcal A$.

Let us define $$i:\GG_2\ni (s,p)\mapsto (0,s,p)\in \mathcal P$$ and $$j:\mathcal P\ni (a,s,p)\mapsto (s,p)\in \GG_2.$$ Then $\tilde \phi:=\phi\circ i:\GG_2 \to \mathcal B$ and $\tilde \psi := j\circ \psi: \mathcal B \to \GG_2$ satisfy $\tilde \psi\circ \tilde \phi=\id$; a contradiction.
\end{proof}

\begin{remark}
Of course, the same argument works for any complete Hartogs domain whose base is $\GG_2$ or, more generally, whose base is any other domain satisfying the assertion of Proposition~\ref{retrp}.
\end{remark}

\bigskip

The author would like to thank Piotr Jucha for some helpful conversations. He is also very grateful to Pawe{\l} Zapa{\l}owski for reading a draft version of the manuscript and number of remarks.





\end{document}